\documentclass[11pt,a4paper]{amsart}

\usepackage[T1]{fontenc}
\usepackage{lmodern}
\usepackage{microtype}
\usepackage[a4paper,margin=1in]{geometry}
\usepackage{mathtools}
\usepackage{amssymb}
\usepackage{booktabs}
\usepackage{float}
\usepackage{tikz}
\usepackage[colorlinks=true,linkcolor=blue!55!black,citecolor=blue!55!black,
            urlcolor=blue!55!black]{hyperref}
\usepackage[nameinlink,capitalize,noabbrev]{cleveref}

\hypersetup{
  pdftitle={Schwarz Symmetrization Can Increase a Nonlocal Threshold Energy},
  pdfauthor={},
  pdfsubject={A counterexample to Schwarz monotonicity for a nonlocal threshold functional},
  pdfkeywords={Schwarz symmetrization, nonlocal threshold energy, counterexample}
}

\newtheorem{theorem}{Theorem}[section]
\newtheorem{lemma}[theorem]{Lemma}
\newtheorem{proposition}[theorem]{Proposition}
\theoremstyle{remark}
\newtheorem{remark}[theorem]{Remark}

\newcommand{\R}{\mathbb{R}}
\newcommand{\1}{\mathbf{1}}
\newcommand{\K}{\mathcal{K}}

\newcommand{\dd}{\,\mathrm{d}}
\newcommand{\doi}[1]{\href{https://doi.org/#1}{doi:#1}}

\title[A nonlocal threshold energy]
{Schwarz Symmetrization Can Increase a Nonlocal Threshold Energy}

\author{Qinyang Li}

\address{School of Mathematical Sciences, Xiamen University, Xiamen 361005, P. R. China}
\email{19020241153813@stu.xmu.edu.cn}

\begin{document}

\begin{abstract}
For $N\geq 1$, $p\geq 1$, and $\delta>0$, consider the nonlocal threshold
functional
\[
  I_{\delta,p}(u)=\iint_{\{|u(x)-u(y)|>\delta\}}
  \frac{\delta^p}{|x-y|^{N+p}}\,\dd x\,\dd y.
\]
Nguyen and Squassina \cite{NguyenSquassina} asked whether $I_{\delta,p}$ decreases under Schwarz
rearrangement.  We give a negative answer in every dimension.  For each
$N\geq1$ and $\delta>0$, one and the same explicit counterexample works for
every $p\geq1$: it is nonnegative, bounded, compactly supported, and takes only
five values.  Its four nontrivial superlevel sets are nested balls whose
centers alternate between two points.  Once the active threshold interactions
are isolated, the energy difference reduces to comparing the interaction of a
unit ball with a concentric annulus and with an eccentric shell.  The sign is
strict because the potential generated by the unit ball decreases with the
radius.  We also compute the energy gap in closed form in dimension one.  This
settles Open Problem~2.2 of Nguyen and Squassina.

\bigskip
\noindent\textbf{Keywords:} Schwarz symmetrization, P\'olya--Szeg\H{o} principle, Nonlocal nonconvex functional, Threshold energy.

\bigskip

  \noindent\textbf{Mathematics Subject Classification (2020)}: Primary 35A23; Secondary 46E35, 26D10.
\end{abstract}

\maketitle

\section{Introduction}

The classical P\'olya--Szeg\H{o} principle says that symmetric decreasing
rearrangement does not increase the Dirichlet integral.  The same order is
preserved by the Gagliardo seminorm, as follows from the rearrangement
inequalities for convex two-point interactions; see, for example,
\cite{AlmgrenLieb,Baernstein,LiebLoss}.  This fits naturally with the
Bourgain--Brezis--Mironescu formula, which recovers the local Dirichlet energy
from fractional difference quotients
\cite{BBM2001,BBM2002,BrezisNguyen}.  It is therefore tempting to expect a
similar monotonicity for other nonlocal approximations of the same energy.

The approximation relevant here is the Bourgain--Nguyen threshold functional
\begin{equation}\label{eq:functional}
  I_{\delta,p}(u)
  :=\iint_{\{|u(x)-u(y)|>\delta\}}
  \frac{\delta^p}{|x-y|^{N+p}}\,\dd x\,\dd y,
  \qquad \delta>0.
\end{equation}
For every $1<p<\infty$ and $u\in W^{1,p}(\R^N)$, one has
\[
  \lim_{\delta\downarrow0} I_{\delta,p}(u)
  =\frac{K_{N,p}}{p}\int_{\R^N}|\nabla u|^p\,\dd x,
  \qquad
  K_{N,p}:=\int_{\mathbb S^{N-1}}|e\cdot\omega|^p\,\dd\omega,
\]
where $e$ is any unit vector; see
\cite[Theorem~2(c)]{NguyenSobolev}.  Related characterization results appear
in \cite{BourgainNguyen}.  Yet the interaction law in
\eqref{eq:functional} is a sharp threshold rather than a convex function of
$|u(x)-u(y)|$.  Nguyen and Squassina showed that this distinction is already
felt by a single polarization: the energy can increase.  They then asked
whether the full Schwarz rearrangement might nevertheless satisfy
\begin{equation}\label{eq:question}
  I_{\delta,p}(u^*)\leq I_{\delta,p}(u)
\end{equation}
for every nonnegative measurable function $u$; see
\cite[Open Problem~2.2]{NguyenSquassina}.

Why should the usual mechanism fail?  The obstruction is already visible at
the scalar level.  For a cost that is expected to decrease when values are
ordered, $-\Phi$ should be supermodular, or equivalently $\Phi$ should satisfy
the submodular lattice inequality
$\Phi(a,c)+\Phi(b,d)\leq\Phi(a,d)+\Phi(b,c)$ for
$a\leq b$ and $c\leq d$; compare \cite{BurchardHajaiej}.  The threshold cost
$\Phi(a,b)=\1_{\{|a-b|>1\}}$ violates this inequality.  Indeed, for
$a=0$, $b=1/3$, $c=7/6$, and $d=5/3$, one has
$\Phi(a,c)+\Phi(b,d)=2>1=\Phi(a,d)+\Phi(b,c)$.  These four numbers are not
merely an abstract witness: they will reappear as layer values in the example
below.  The remaining difficulty is geometric.  Both entries of the cost must
come from the same function, and all of its superlevel sets must rearrange at
once.  In a different direction, a one-dimensional monotone rearrangement,
applied after vertical $\delta$-segmentation, is central to the
$\Gamma$-convergence analysis of threshold-type interactions in
\cite{AntonucciEtAl}; that argument concerns optimal limiting constants rather
than Schwarz monotonicity of \eqref{eq:functional}.

Our main result resolves \eqref{eq:question} negatively throughout the
exponent range of the polarization theorem in \cite{NguyenSquassina}.  In
fact, the example can be chosen independently of $p$.

\begin{theorem}\label{thm:main}
Let $N\geq1$ and $\delta>0$.  There exists a nonnegative compactly supported
simple function $u_{N,\delta}\in L^\infty(\R^N)$, taking five values, such that,
simultaneously for every $p\geq1$,
\[
  0<I_{\delta,p}(u_{N,\delta})
  <I_{\delta,p}(u_{N,\delta}^*)<\infty.
\]
\end{theorem}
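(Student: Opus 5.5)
We fix $\delta=1$ by scaling. Since $I_{\delta,p}(\delta v)$ is the threshold-$1$ energy of $v$ multiplied by $\delta^p$, and Schwarz rearrangement commutes with multiplication by $\delta$, this loses nothing.

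\emph{The candidate.} We take
\[
u=\sum_{i=1}^4 h_i\1_{B_i},\qquad B_4\subset B_3\subset B_2\subset B_1,
\]
with closed balls $B_i=\overline{B}(c_i,r_i)$ whose centres alternate, $c_1=c_3=0$ and $c_2=c_4=e$ for a small shift $e$. The jumps $h_i>0$ are chosen so that the layer values $v_0=0<v_1<\dots<v_4$ contain the four numbers $0,\tfrac13,\tfrac76,\tfrac53$ of the submodularity witness, possibly together with one auxiliary value. The rearrangement $u^*$ has the same values, on concentric balls with the same radii $r_i$.

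We write $L_0=B_1^c$ and $L_i=B_i\setminus B_{i+1}$, with $B_5=\emptyset$. Then
\[
I_{1,p}(u)=\sum_{(i,j)\in\mathcal A}2\,E(L_i,L_j),\qquad E(A,B)=\iint_{A\times B}\frac{\dd x\,\dd y}{|x-y|^{N+p}},
\]
where $\mathcal A$ is the set of pairs $i<j$ with $v_j-v_i>1$. The same formula holds for $u^*$ with the concentric layers $L_i^*$, and $\mathcal A$ depends only on the values, not on the geometry.

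\emph{Finiteness and positivity.} For $p\geq1$ we have $E(B,B^c)=\infty$ for a ball, so no active pair may consist of touching layers. We therefore choose the values so that consecutive gaps satisfy $v_{i+1}-v_i\leq1$. Then every active pair has $j\geq i+2$. We also choose radii and shift so that $\operatorname{dist}(B_{i+2},B_{i+1}^c)>0$ in both configurations, which holds if $r_{i+1}-r_{i+2}>|e|$. With these choices every active $E(L_i,L_j)$ has separated arguments and bounded supports, except those involving $L_0$. The terms with $L_0$ are still finite because the kernel is integrable at infinity. Positivity holds because at least one pair, e.g.\ $(0,\tfrac76)$, is active. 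Finiteness is uniform in $p\geq1$ once the geometry is fixed.

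\emph{Reduction of the difference.} Using $\1_{L_i}=\1_{B_i}-\1_{B_{i+1}}$, we expand each active $E(L_i,L_j)$ bilinearly into terms $E(B_k,B_l^c)$ and $E(B_k,B_l)$ with $k\ge l$ disjointness structure. Terms involving two balls with the same centre coincide for $u$ and $u^*$, as do all terms involving a single ball. The pattern of active pairs is chosen so that, after cancellation, $I(u^*)-I(u)$ collapses to a single comparison. On one side is the interaction of the innermost-type ball with a concentric annulus, as in $u^*$. On the other is its interaction with the same annulus translated by $e$, i.e.\ an eccentric shell, as in $u$. Concretely, we aim for
\[
I_{1,p}(u^*)-I_{1,p}(u)=2\int_{\R^N}V(x)\bigl(\1_{A}(x)-\1_{A+e}(x)\bigr)\dd x,\qquad V(x)=\int_{B(0,1)}\frac{\dd y}{|x-y|^{N+p}},
\]
where $A=\{\rho_1<|x|<\rho_2\}$ lies outside $B(0,1+|e|)$. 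The four witness values enter exactly here: the pairs $(0,\tfrac76)$ and $(\tfrac13,\tfrac53)$ are active, while $(0,\tfrac53)$ would be the ordered replacement and $(\tfrac13,\tfrac76)$ is inactive.

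\emph{Sign.} $V$ is radial and strictly decreasing in $|x|$ on $\{|x|>1\}$. This follows by differentiating under the integral, or by Riesz-type rearrangement of the kernel against the ball. Hence the claim is that the concentric annulus captures strictly more $V$-mass than its translate, which has equal volume.

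To see this, we write $\1_A-\1_{A+e}$ as (ball of radius $\rho_2$ minus its translate) minus (ball of radius $\rho_1$ minus its translate). For a single radius $\rho$, the difference $\int_{B_\rho}V-\int_{B_\rho+e}V$ equals $\int_{\R^N}\bigl(V-V(\cdot+e)\bigr)\1_{B_\rho}$. We would show that this quantity is strictly increasing in $\rho$, or we would argue layerwise on spheres: on each sphere $|x|=t$ the translated shell is spread over radii where $V$ is smaller on average.

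The main obstacle is this step, the strict inequality for an eccentric annulus rather than a ball. It is not a direct consequence of Riesz's inequality. I would try first to make the shift $e$ so large that $A+e$ lies entirely in $\{|x|\geq\rho_2\}$, where $V\le V(\rho_2)<\inf_A V$, giving strictness trivially. This requires nesting radii with room, and the gap conditions $r_{i+1}-r_{i+2}>|e|$ above are compatible with that by taking the outer radii large. Since nothing depends on $p$ except the positive kernel, the same $u$ works for all $p\ge1$.
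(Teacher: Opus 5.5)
\textbf{There is a genuine gap in the reduction step.} Your construction is essentially the paper's: alternating centres $c_1=c_3=0$, $c_2=c_4=e$, and layer values containing $0,\frac13,\frac76,\frac53$. The finiteness and positivity parts are fine. But the reduction is only announced (``we aim for''), and the comparison you announce is not the one that arises.

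Carry the bookkeeping out. Take the auxiliary value $v_2\in[\frac23,1]$ (the paper uses $\frac56$), so that the active pairs are exactly $(L_0,L_3)$, $(L_0,L_4)$, $(L_1,L_4)$. Then
$I_{1,p}(u)/2=E(B_1^c,B_3)+E(B_2^c,B_4)-E(B_1^c,B_4)$, and all three terms are finite. The first two are unchanged by rearrangement because $c_1=c_3$ and $c_2=c_4$; only $E(B_1^c,B_4)$ changes. After translating by $-e$, the shell around the innermost ball is $B(-e,r_1)\setminus B(0,r_2)$. Its inner sphere stays concentric with the innermost ball and only the \emph{outer} ball is displaced, so it is not the rigid translate $A+e$ of the concentric annulus. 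Hence $I_{1,p}(u^*)-I_{1,p}(u)=2\bigl(\int_C V-\int_D V\bigr)$, where $V$ is now the potential of $B(0,r_4)$ and
\[
C=B(0,r_1)\setminus B(-e,r_1)\subset\{|x|<r_1\},\qquad D=B(-e,r_1)\setminus B(0,r_1)\subset\{|x|>r_1\}
\]
up to null sets, with $|C|=|D|>0$. Strict radial decrease of $V$ then gives $\int_C V-\int_D V>v(r_1)(|C|-|D|)=0$. This cap argument is exactly the paper's lemma. The ``eccentric annulus'' obstacle you single out is an artefact of misidentifying the geometry: what must be compared is one ball against its translate.

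\textbf{The inequality you set out to prove is false in general, and your fallbacks cannot work.} Take $N=1$, $e>0$, and $G(\rho)=\int_\rho^\infty V$. A direct computation gives $\int_A V-\int_{A+e}V=\Delta(\rho_2)-\Delta(\rho_1)$, where $\Delta(\rho)=G(\rho+e)+G(\rho-e)-2G(\rho)$. Since $V$ is strictly convex on $(1,\infty)$, $G''=-V'$ is strictly decreasing, hence so is $\Delta$, and the difference is \emph{negative}. Intuitively, a rigidly translated annulus brings its inner edge closer to the ball, and this gain outweighs the loss at its outer edge. For the same reason $f(\rho)=\int V(\1_{B_\rho}-\1_{B_\rho+e})$ cannot be increasing in $\rho$: it is positive and tends to $0$.

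The large-shift idea is geometrically impossible. Nesting puts the innermost ball inside the hole of the shell, and your own constraint $\rho_1\geq 1+|e|$ contradicts $A+e\subset\{|x|\geq\rho_2\}$, which would force $|e|\geq 2\rho_2$. As written, the proposal therefore contains no valid proof of the strict sign. The missing observation is that the inner boundary of the shell is common to both configurations, which reduces everything to a single displaced ball.
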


\begin{figure}[H]
\centering
\begin{tikzpicture}[scale=0.82, every node/.style={font=\small},
                    line cap=round]
  \begin{scope}[xshift=-4.8cm]
    \draw[thick] (0,0) circle (2.40);
    \draw[thick] (0,0) circle (1.20);
    \draw[thick,densely dashed] (0.60,0) circle (1.80);
    \draw[thick,densely dashed] (0.60,0) circle (0.60);
    \fill (0,0) circle (1.3pt);
    \fill (0.60,0) circle (1.3pt);
    \draw[->,thin,shorten <=1.5pt,shorten >=1.5pt]
      (0,0) -- node[midway,yshift=-7pt] {$e$} (0.60,0);
    \node at (0,-2.78) {(a) Alternating centers};
    \node[fill=white,inner sep=1pt] at (-1.62,1.30) {$E_1$};
    \node[fill=white,inner sep=1pt] at ( 1.12,1.02) {$E_2$};
    \node[fill=white,inner sep=1pt] at (-0.62,0.68) {$E_3$};
    \node[fill=white,inner sep=1pt] at ( 0.60,0.30) {$E_4$};
  \end{scope}
  \begin{scope}[xshift=3.0cm]
    \draw[thick] (0,0) circle (2.40);
    \draw[thick,densely dashed] (0,0) circle (1.80);
    \draw[thick] (0,0) circle (1.20);
    \draw[thick,densely dashed] (0,0) circle (0.60);
    \fill (0,0) circle (1.3pt);
    \node at (0,-2.78) {(b) Schwarz rearrangement};
    \node[fill=white,inner sep=1pt] at (-1.62,1.30) {$E_1^*$};
    \node[fill=white,inner sep=1pt] at ( 1.12,1.02) {$E_2^*$};
    \node[fill=white,inner sep=1pt] at (-0.62,0.68) {$E_3^*$};
    \node[fill=white,inner sep=1pt] at ( 0,0.30) {$E_4^*$};
  \end{scope}
\end{tikzpicture}
\caption{The alternating-center construction and its Schwarz rearrangement.
In panel~(a), $E_1=B_4(0)$, $E_2=B_3(e)$, $E_3=B_2(0)$, and
$E_4=B_1(e)$, where $|e|=1$.  Thus the solid circles are centered at $0$,
while the dashed circles are centered at $e$.  Panel~(b) shows the
concentric balls $E_j^*=B_{5-j}(0)$.}
\label{fig:geometry}
\end{figure}

Figure~\ref{fig:geometry} contains the geometric heart of the construction.
We begin with four internally tangent balls of radii $4,3,2,1$, whose centers
alternate between two points at unit distance.  The plateau heights are then
chosen so that every interaction near the diagonal stays below the threshold.
Only three well-separated pairs of layers remain.  Two of them combine into a
term that is already radially arranged and hence does not change.  The entire
energy difference is carried by the third: it compares the innermost ball with
a concentric annulus and with an eccentric shell.  The concentric annulus has
the strictly larger interaction.

We isolate that geometric comparison first.  Once it is available, the
counterexample itself will amount to arranging the five plateau values so that
no other term survives.

\section{An eccentric-shell comparison}

Only one geometric observation is needed.  Outside the unit ball, the
potential generated by that ball decreases strictly with distance from the
origin.  Thus, between two shells of equal volume, the one that keeps more
mass closer to the origin has the larger interaction with $B_1$.  The next
lemma records exactly the configuration that will arise in the proof.

Write $B_r(z)=\{x\in\R^N:|x-z|<r\}$ and abbreviate
$B_r=B_r(0)$.  For measurable sets $A,B\subset\R^N$, set
\begin{equation}\label{eq:setinteraction}
  \K_p(A,B):=\int_A\int_B\frac{1}{|x-y|^{N+p}}\,\dd y\,\dd x
\end{equation}
whenever the integral is finite.

\begin{lemma}\label{lem:shell}
Let $N\geq1$ and $p>0$, and let $e\in\R^N$ be a unit vector.  Then
\begin{equation}\label{eq:shellcomparison}
  \K_p(B_4\setminus B_3,B_1)
  >\K_p(B_4(-e)\setminus B_3,B_1).
\end{equation}
Both interactions in \eqref{eq:shellcomparison} are finite.
\end{lemma}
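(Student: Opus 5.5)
Define the potential of the unit ball,
\[
  V(x):=\int_{B_1}\frac{\dd y}{|x-y|^{N+p}},\qquad |x|>1 .
\]
Then $\K_p(A,B_1)=\int_A V(x)\dd x$ for every measurable $A\subset\R^N\setminus \overline{B_1}$. Both shells avoid $B_1$ by a positive distance. Indeed, $B_4\setminus B_3\subset\{|x|\ge3\}$, and $B_4(-e)\setminus B_3$ also lies in $\{|x|\geq3\}$. On $\{|x|\ge 3\}$ we have $|x-y|\ge 2$ for $y\in B_1$, so $V\le |B_1|2^{-N-p}$ there. Since both shells are bounded, both interactions are finite. This settles the last sentence of the lemma.

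\textbf{Step 1: $V$ is radial and strictly decreasing.} Radial symmetry follows from the rotation invariance of $B_1$ and of the kernel. For strict monotonicity, fix $1<r<s$ and compare $V(re_1)$ with $V(se_1)$. I would write $V(se_1)=\int_{B_1(-se_1)}|z|^{-N-p}\dd z$, the integral of the radially strictly decreasing function $|z|^{-N-p}$ over a unit ball whose center lies farther from the origin. A clean route is reflection. Let $H$ be the hyperplane bisecting the centers $-re_1$ and $-se_1$. The reflection across $H$ maps $B_1(-se_1)\setminus B_1(-re_1)$ onto $B_1(-re_1)\setminus B_1(-se_1)$. The image points lie on the side of $H$ containing $0$, so they are strictly closer to $0$. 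Hence $V(re_1)>V(se_1)$. If the two balls are disjoint, the same reflection argument still applies.

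\textbf{Step 2: compare the two shells.} The shells have equal measure $|B_4|-|B_3|$. Let $S=B_4\setminus B_3$ and $T=B_4(-e)\setminus B_3$. Then
\[
  \int_S V-\int_T V=\int_{S\setminus T}V-\int_{T\setminus S}V ,
\]
and $|S\setminus T|=|T\setminus S|>0$. Now $S\setminus T=B_4\setminus B_4(-e)$. Every point of $T\setminus S=B_4(-e)\setminus B_4$ satisfies $|x|\ge4$, while $S\setminus T\subset B_4$ gives $|x|<4$ there. Since $V$ is strictly decreasing and radial,
\[
  V>V|_{\partial B_4}\ \text{on } S\setminus T,\qquad V\le V|_{\partial B_4}\ \text{on } T\setminus S .
\]
Integrating over sets of equal positive measure gives the strict inequality \eqref{eq:shellcomparison}.

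\textbf{Where the care is needed.} The only real obstacle is Step 1. In particular, the reflection must be set up correctly, and the bound on $V(re_1)-V(se_1)$ must be strict. Strictness holds because the symmetric difference $B_1(-se_1)\setminus B_1(-re_1)$ has positive measure and the distance comparison is strict off $H$. In dimension $N=1$ the same argument is an elementary computation, so no separate treatment is needed.
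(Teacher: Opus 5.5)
Your proof is correct and follows the paper's argument: you use the same potential $V$ of $B_1$, discard the common part of the two shells, and compare the two equal-measure caps $B_4\setminus B_4(-e)$ and $B_4(-e)\setminus B_4$ against the value of $V$ on $\partial B_4$. The only difference is that you get strict radial monotonicity of $V$ by reflecting across the hyperplane bisecting $-re_1$ and $-se_1$, whereas the paper differentiates under the integral sign; both work, and yours avoids having to justify that differentiation.
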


\begin{proof}
Since $B_3\subset B_4(-e)$, the sets
\[
  F_0:=B_4\setminus B_3,
  \qquad
  F_e:=B_4(-e)\setminus B_3
\]
are genuine shells of the same finite measure.  Both lie at distance at least
$2$ from $B_1$, so their interactions with $B_1$ are finite.

To compare them, introduce the potential generated by $B_1$,
\begin{equation}\label{eq:potential}
  V(x):=\int_{B_1}\frac{1}{|x-y|^{N+p}}\,\dd y.
\end{equation}
It is defined for $|x|>1$, which contains both shells.
Rotational invariance gives $V(x)=v(|x|)$.  For $r>1$ we may differentiate
under the integral sign (locally uniformly on compact subintervals of
$(1,\infty)$) and obtain
\begin{equation}\label{eq:potentialderivative}
  v'(r)=-(N+p)\int_{B_1}
  \frac{r-y_1}{|re_1-y|^{N+p+2}}\,\dd y<0,
\end{equation}
because $r-y_1>0$ for every $y\in B_1$.  Thus $V$ is strictly decreasing
as a function of $|x|$.

The common part $F_0\cap F_e$ contributes equally to the two interactions
and can be discarded.  Set
\[
  C:=F_0\setminus F_e,
  \qquad
  D:=F_e\setminus F_0.
\]
These are the two caps created by translating the outer ball.  The equality
$|F_0|=|F_e|$ gives $|C|=|D|$, and this common value is positive because the
two radius-$4$ balls are distinct.  Since $B_3$ lies in both of them, the cap
removed from the concentric shell lies inside $B_4\setminus B_3$, whereas the
cap added to form the eccentric shell lies outside $B_4$.  Thus, up to null
sets,
\[
  3<|x|<4\quad\text{for a.e. }x\in C,
  \qquad
  |x|>4\quad\text{for a.e. }x\in D.
\]
The removed cap therefore sits where the potential is larger than $v(4)$,
while the added cap sits where it is smaller.  Consequently,
\begin{align*}
  \K_p(F_0,B_1)-\K_p(F_e,B_1)
  &=\int_CV(x)\,\dd x-\int_DV(x)\,\dd x\\
  &>v(4)(|C|-|D|)=0,
\end{align*}
which proves \eqref{eq:shellcomparison}.
\end{proof}

\section{The counterexample}

\begin{proof}[Proof of \cref{thm:main}]
Fix a unit vector $e\in\R^N$ and introduce the four balls
\begin{equation}\label{eq:nestedballs}
  E_1=B_4,
  \qquad E_2=B_3(e),
  \qquad E_3=B_2,
  \qquad E_4=B_1(e).
\end{equation}
The choice of the radii and centers gives
\begin{equation}\label{eq:nesting}
  E_4\subset E_3\subset E_2\subset E_1.
\end{equation}
Indeed, the centers move by one unit each time the radius drops by one.  The
triangle inequality gives every inclusion in \eqref{eq:nesting}, and each pair
of successive balls is internally tangent at one boundary point.

With the geometric comparison already in hand, we now choose the amplitudes so
that it will be the only interaction changed by rearrangement.  Define
$u_{N,\delta}$ (temporarily written as $u$) by
\begin{equation}\label{eq:counterexample}
  u:=\delta\left(
  \frac13\1_{E_1}+\frac12\1_{E_2}
  +\frac13\1_{E_3}+\frac12\1_{E_4}
  \right).
\end{equation}
The alternating increments $1/3,1/2,1/3,1/2$ are designed to separate the
amplitude bookkeeping from the geometry.  To see this, let
\[
  A_0:=\R^N\setminus E_1,
  \quad A_1:=E_1\setminus E_2,
  \quad A_2:=E_2\setminus E_3,
  \quad A_3:=E_3\setminus E_4,
  \quad A_4:=E_4.
\]
The values of $u$ on the five layers are
\begin{equation}\label{eq:layervalues}
\begin{array}{c|ccccc}
  \text{layer} & A_0&A_1&A_2&A_3&A_4\\ \midrule
  u/\delta &0&\frac13&\frac56&\frac76&\frac53.
\end{array}
\end{equation}
Reading across this table, the unordered pairs of layers whose values differ
by more than $\delta$ are exactly
\begin{equation}\label{eq:activepairs}
  (A_0,A_3),\qquad (A_0,A_4),\qquad (A_1,A_4).
\end{equation}
This is the decisive bookkeeping point.  Every pair of layers whose closures
meet has value gap at most $5\delta/6$, and in fact every inactive difference
is at most $5\delta/6$.  The threshold therefore removes all near-diagonal
interactions and leaves only the three spatially separated pairs in
\eqref{eq:activepairs}.

Let $p\geq1$ be arbitrary; no choice made above depends on it.  The first two
active pairs share $A_0$, and $A_3\cup A_4=E_3$.  Using also the symmetry of
the ordered double integral, we obtain
\begin{equation}\label{eq:energyu}
  \frac{I_{\delta,p}(u)}{2\delta^p}
  =\K_p(E_1^c,E_3)+\K_p(E_1\setminus E_2,E_4).
\end{equation}
Both pairs of sets on the right-hand side are separated by distance at least
$2$.  The second interaction is bounded and is therefore immediately finite.
The first reaches infinity, so we record the elementary tail estimate
\begin{align}\label{eq:tailbound}
 \K_p(B_4^c,B_2)
 &\leq |B_2|\left[
 2^{-N-p}|B_8\setminus B_4|
 +2^{N+p}\int_{|x|\geq8}|x|^{-N-p}\,\dd x\right]<\infty.
\end{align}
Here we used $|x-y|\geq |x|/2$ when $|x|\geq8$ and $y\in B_2$; the final
integral converges because $p>0$.  Thus both terms in \eqref{eq:energyu} are
finite, and both are plainly positive.

Notice that the first term already involves the concentric pair
$E_1=B_4$ and $E_3=B_2$; it will remain unchanged under rearrangement.  To
identify what happens to the second term, we read the rearranged function from
its superlevel sets.  Apart from the four plateau heights,
\[
  \{u>t\}=\begin{cases}
  E_1,&0<t<\delta/3,\\
  E_2,&\delta/3<t<5\delta/6,\\
  E_3,&5\delta/6<t<7\delta/6,\\
  E_4,&7\delta/6<t<5\delta/3.
  \end{cases}
\]
Since $|E_j|=|B_{5-j}|$ for $j=1,\ldots,4$, the equimeasurability
characterization of Schwarz rearrangement gives
\begin{equation}\label{eq:rearrangedu}
  u^*=\delta\left(
  \frac13\1_{B_4}+\frac12\1_{B_3}
  +\frac13\1_{B_2}+\frac12\1_{B_1}
  \right).
\end{equation}
Repeating the layer computation gives
\begin{equation}\label{eq:energyustar}
  \frac{I_{\delta,p}(u^*)}{2\delta^p}
  =\K_p(B_4^c,B_2)+\K_p(B_4\setminus B_3,B_1).
\end{equation}
The promised cancellation is now visible: the first terms in
\eqref{eq:energyu} and \eqref{eq:energyustar} agree because $E_1=B_4$ and
$E_3=B_2$.  The whole energy difference is therefore carried by the two shell
interactions.  Translating both variables by $-e$ in the remaining term of
\eqref{eq:energyu}, we find
\[
  \K_p(E_1\setminus E_2,E_4)
  =\K_p(B_4(-e)\setminus B_3,B_1).
\]
Consequently, \cref{lem:shell} gives
\begin{align*}
  \frac{I_{\delta,p}(u^*)-I_{\delta,p}(u)}{2\delta^p}
  &=\K_p(B_4\setminus B_3,B_1)
    -\K_p(B_4(-e)\setminus B_3,B_1)\\
  &>0.
\end{align*}
Since $p$ was arbitrary, the same $u_{N,\delta}$ works simultaneously for
every $p\geq1$.  This proves the theorem.
\end{proof}

\section{An exact one-dimensional check}

No separate argument is needed in dimension one, but the interval model makes
the cancellation completely visible.  It also provides a useful check on the
factor $2$ coming from the ordered double integral.  Taking $e=-1$ in
\eqref{eq:nestedballs}, translating by $4$, and setting $\delta=1$, we obtain
\begin{equation}\label{eq:onedu}
u(x)=
\begin{cases}
  \frac56,&0<x<2,\\
  \frac53,&2<x<4,\\
  \frac76,&4<x<6,\\
  \frac13,&6<x<8,\\
  0,&\text{otherwise}.
\end{cases}
\end{equation}
Its symmetric decreasing rearrangement is
\begin{equation}\label{eq:onedustar}
u^*(x)=
\begin{cases}
  \frac53,&|x|<1,\\
  \frac76,&1<|x|<2,\\
  \frac56,&2<|x|<3,\\
  \frac13,&3<|x|<4,\\
  0,&|x|>4.
\end{cases}
\end{equation}

\begin{proposition}\label{prop:oned}
For the functions in \eqref{eq:onedu}--\eqref{eq:onedustar} and $p>1$,
\begin{equation}\label{eq:gapformula}
  I_{1,p}(u^*)-I_{1,p}(u)
  =\frac{2}{p(p-1)}
  \left(2^{1-p}-2\,3^{1-p}+2\,5^{1-p}-6^{1-p}\right)>0.
\end{equation}
At $p=1$ the gap is
\begin{equation}\label{eq:p1gap}
  I_{1,1}(u^*)-I_{1,1}(u)=2\log\frac{27}{25}>0.
\end{equation}
In particular, when $p=2$,
\[
  I_{1,2}(u)=\frac56,
  \qquad I_{1,2}(u^*)=\frac9{10},
  \qquad I_{1,2}(u^*)-I_{1,2}(u)=\frac1{15}.
\]
\end{proposition}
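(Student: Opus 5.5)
The plan is to specialize the layer bookkeeping from the proof of \cref{thm:main} to $N=1$, $\delta=1$, and evaluate every set interaction explicitly with interval calculus. By \eqref{eq:energyu} and \eqref{eq:energyustar}, the tail terms $\K_p(E_1^c,E_3)$ and $\K_p(B_4^c,B_2)$ agree. In the coordinates of \eqref{eq:onedu}–\eqref{eq:onedustar} they are $\K_p(\R\setminus(0,8),(2,6))$ and $\K_p(\R\setminus(-4,4),(-2,2))$, which are translates of each other. Hence
\[
I_{1,p}(u^*)-I_{1,p}(u)=2\bigl[\K_p\bigl((-4,-3)\cup(3,4),(-1,1)\bigr)-\K_p\bigl((6,8),(2,4)\bigr)\bigr].
\]
I will read off the layers directly from \eqref{eq:onedu}: $E_4=(2,4)$ and $E_1\setminus E_2=(6,8)$. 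For $u^*$ the corresponding sets are $(-1,1)$ and $3<|x|<4$.

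The key tool is a closed form for disjoint intervals $a<b\le c<d$. For $p>1$, set $G(t)=t^{1-p}/(p(p-1))$; this is a second antiderivative of $t^{-1-p}$. Integrating twice gives
\[
\K_p((a,b),(c,d))=G(c-b)-G(c-a)-G(d-b)+G(d-a).
\]

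Applying this to each piece:
\begin{itemize}
\item $(6,8)$ against $(2,4)$: the four distances are $2,4,4,6$, giving $G(2)-2G(4)+G(6)$.
\item $(3,4)$ against $(-1,1)$: the distances are $2,4,3,5$, giving $G(2)-G(3)-G(4)+G(5)$.
\item $(-4,-3)$ against $(-1,1)$: by reflection this equals the previous term.
\end{itemize}
Subtracting, the $G(4)$ terms cancel, and the gap is $2[G(2)-2G(3)+2G(5)-G(6)]$. This is exactly \eqref{eq:gapformula}. Strict positivity needs no new work: it is the $N=1$ instance of \cref{lem:shell}, with the eccentric shell reflected. One can also check it directly, since $t\mapsto t^{1-p}$ is decreasing and convex.

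For $p=1$, I will redo the computation with the second antiderivative $-\log t$ of $t^{-2}$ in place of $G$. The same distances give
\[
2\bigl[-\log2+2\log3-2\log5+\log6\bigr]=2\log\tfrac{27}{25}.
\]
This is also the $p\downarrow1$ limit of \eqref{eq:gapformula}, which serves as a consistency check.

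For $p=2$ the gap $2\bigl(\tfrac12-\tfrac23+\tfrac25-\tfrac16\bigr)=\tfrac1{15}$ is immediate. To get the individual energies, the tail term must be computed as well, and this is the only step needing care. I will use $\K_2((0,\infty)\text{-type half-line},\,\text{interval})$: against $(c,\infty)$, an interval $(a,b)$ with $b\le c$ gives $\int_a^b \tfrac{1}{2(c-x)^2}\dd x$. Applying this on both sides of $(2,6)$ relative to $(0,8)$, adding $2[G(2)-2G(4)+G(6)]$ with $G(t)=1/(2t)$, and then multiplying by the factor $2$ from \eqref{eq:energyu} should give $5/6$; adding $1/15$ then gives $9/10$. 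The main obstacle I anticipate is purely bookkeeping: keeping the factor $2$ from the ordered double integral consistent, and the sign conventions in $G$. I will cross-check these by confirming that the $p=2$ totals differ by exactly $1/15$.
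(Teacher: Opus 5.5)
Your argument is correct in its main line, but several steps take a different route from the paper's.

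\textbf{Comparison with the paper.} The paper computes both energies in closed form for every $p>1$, using half-line limits of the interval formula. It then subtracts them and notes that the $4^{1-p}$ terms cancel. Positivity comes from a self-contained identity that writes the bracket as $q\int_0^1[(2+t)^{-q-1}+(5+t)^{-q-1}-(3+t)^{-q-1}-(4+t)^{-q-1}]\dd t$, combined with strict convexity of $s\mapsto s^{-q-1}$. The case $p=1$ is reached by dominated convergence as $p\downarrow1$ together with $F'(0)=\log\frac{27}{25}$. The $p=2$ values are read off the two closed forms.

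You instead do the following.
\begin{itemize}
\item You cancel the translated tail terms at the outset, so the gap needs only bounded-interval interactions.
\item You take positivity from \cref{lem:shell} with $N=1$. This is legitimate, since the lemma holds for every unit vector and every $p>0$.
\item You treat $p=1$ directly with the second antiderivative $-\log t$ of $t^{-2}$. This gives $2\bigl[2\log\frac65-\log\frac43\bigr]=2\log\frac{27}{25}$ with no limiting argument.
\end{itemize}
Your route is shorter for the gap and cleaner at $p=1$. The paper's route also yields the individual energies for all $p>1$. Because its positivity proof does not pass through \cref{lem:shell}, it serves as an independent check of the general mechanism, in keeping with that section's role.

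\textbf{Corrections.}

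First, the aside that positivity follows directly because $t\mapsto t^{1-p}$ is decreasing and convex is false as stated. Take $f(t)=(5-t)_+ +\varepsilon e^{-t}$: it is strictly decreasing and strictly convex, yet $f(2)-2f(3)+2f(5)-f(6)=-1+O(\varepsilon)<0$. What is needed is third-order information: $f''\propto t^{-1-p}$ is decreasing, equivalently $-f'\propto t^{-p}$ is convex. That is exactly what the monotone potential in \cref{lem:shell}, or the paper's integral identity, supplies.

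Second, your $p=2$ bookkeeping has factor-$2$ slips.
\begin{itemize}
\item The prefactor $\frac{2}{p(p-1)}$ equals $1$ at $p=2$, so the gap is $\frac12-\frac23+\frac25-\frac16=\frac1{15}$. Your displayed $2(\cdots)$ equals $\frac2{15}$.
\item Take $T=\K_2(\R\setminus(0,8),(2,6))=\frac16+\frac16=\frac13$ and $S=G(2)-2G(4)+G(6)=\frac1{12}$. Then $I_{1,2}(u)=2(T+S)=\frac56$. Adding $2S$ and then doubling, as your recipe reads, gives $1$. Apply the factor $2$ exactly once.
\item Obtaining $I_{1,2}(u^*)$ as $\frac56+\frac1{15}$ makes your proposed cross-check circular. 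Computing $2\bigl(T+2[G(2)-G(3)-G(4)+G(5)]\bigr)=2\bigl(\frac13+\frac7{60}\bigr)=\frac9{10}$ independently gives a genuine check.
\end{itemize}
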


\begin{proof}
For $a<b<c<d$ and $p>1$, direct integration gives
\begin{equation}\label{eq:intervalinteraction}
\int_a^b\int_c^d\frac{1}{(y-x)^{1+p}}\,\dd y\,\dd x
=\frac{(c-b)^{1-p}+(d-a)^{1-p}-(c-a)^{1-p}-(d-b)^{1-p}}
{p(p-1)}.
\end{equation}
The corresponding half-line formulas follow by letting $a\to-\infty$ or
$d\to\infty$.  In the coordinates of \eqref{eq:onedu}, the three active
layer pairs give
\[
  \frac{I_{1,p}(u)}{2}
  =\K_p\big((-\infty,0)\cup(8,\infty),(2,6)\big)
   +\K_p\big((2,4),(6,8)\big),
\]
whereas the rearranged layers give
\[
  \frac{I_{1,p}(u^*)}{2}
  =\K_p\big((-\infty,-4)\cup(4,\infty),(-2,2)\big)
   +\K_p\big((-4,-3)\cup(3,4),(-1,1)\big).
\]
Applying \eqref{eq:intervalinteraction}, together with its half-line limits,
and remembering that the ordered double integral counts both orientations,
gives
\begin{align}
 I_{1,p}(u)
 &=\frac{2}{p(p-1)}
   \left(3\,2^{1-p}-2\,4^{1-p}-6^{1-p}\right),
   \label{eq:onedenergyu}\\
 I_{1,p}(u^*)
 &=\frac{4}{p(p-1)}
   \left(2\,2^{1-p}-3^{1-p}-4^{1-p}
         +5^{1-p}-6^{1-p}\right).
   \label{eq:onedenergyustar}
\end{align}
Subtracting these identities yields \eqref{eq:gapformula}; in particular, the
terms containing $4^{1-p}$ cancel.

We still have to see why the resulting expression is positive.  Put
$q=p-1>0$.  The expression in
parentheses in \eqref{eq:gapformula} equals
\begin{align}\label{eq:convexidentity}
q\int_0^1\big[&(2+t)^{-q-1}+(5+t)^{-q-1}\nonumber\\
              &-(3+t)^{-q-1}-(4+t)^{-q-1}\big]\,\dd t.
\end{align}
For every fixed $t\in[0,1]$, the two pairs $2+t,5+t$ and $3+t,4+t$
have the same sum, while the first pair is strictly more spread out.
The function $s\mapsto s^{-q-1}$ is strictly convex.  Hence the integrand in
\eqref{eq:convexidentity} is strictly positive, which proves the sign in
\eqref{eq:gapformula}.

It remains to pass to $p=1$.  All active pairs are separated by distance at
least $2$.  Hence, for $1\leq p\leq2$, their kernels are dominated by the
integrable $p=1$ kernel.  Dominated convergence gives
$I_{1,p}(u)\to I_{1,1}(u)$ and
$I_{1,p}(u^*)\to I_{1,1}(u^*)$.  If
\[
  F(q):=2^{-q}-2\,3^{-q}+2\,5^{-q}-6^{-q},
\]
then $F(0)=0$ and
\[
  F'(0)=-\log2+2\log3-2\log5+\log6
       =\log\frac{27}{25}.
\]
Letting $p\downarrow1$ in \eqref{eq:gapformula} now gives
\eqref{eq:p1gap}.  Finally, setting $p=2$ in
\eqref{eq:onedenergyu}--\eqref{eq:onedenergyustar} gives
$I_{1,2}(u)=5/6$ and $I_{1,2}(u^*)=9/10$.
\end{proof}

\section{Concluding remarks}

\begin{remark}[Why the energy stays finite]
Although the counterexample is discontinuous, every pair of layers whose
closures meet has amplitude difference at most $5\delta/6$.  Thus no pair
selected by the threshold can approach the diagonal; indeed, every active
pair is separated by distance at least $2$.  The singularity of
$|x-y|^{-N-p}$ is therefore never sampled.  The sole unbounded interaction
has an integrable
$|x|^{-N-p}$ tail, as quantified in \eqref{eq:tailbound}; hence both energies
are finite.
\end{remark}

\begin{remark}[The threshold convention]
None of the differences between plateau values is exactly $\delta$.
Accordingly, the same counterexample works if the condition $>\delta$ in
\eqref{eq:functional} is replaced by $\geq\delta$.
\end{remark}

\begin{remark}[Compatibility with the limiting formula]
The construction scales with the threshold: for fixed $N$,
$u_{N,\delta}=\delta u_{N,1}$.  Thus the function itself varies with
$\delta$.  There is no conflict with the pointwise limit formula for a fixed
Sobolev function as $\delta\downarrow0$, nor with the
classical P\'olya--Szeg\H{o} inequality for the limiting Dirichlet energy.
\end{remark}

\begin{remark}[Range of exponents]
The proof of \cref{lem:shell} and the finiteness argument in fact work for
every $p>0$.  We stated \cref{thm:main} for $p\geq1$ to match the exponent
range of the polarization theorem in \cite{NguyenSquassina}.  In particular,
the counterexample covers the motivating regime $1<p<N$ whenever that range
is nonempty.
\end{remark}

\subsection*{Data availability}
No data were used for the research described in this article.

\subsection*{Competing interests}
The authors declare that they have no competing interests.

\subsection*{AI assistance statement}
The authors used OpenAI models to assist with language polishing.


\begin{thebibliography}{99}

\bibitem{AlmgrenLieb}
F.~J. Almgren, Jr. and E.~H. Lieb,
\emph{Symmetric decreasing rearrangement is sometimes continuous},
J. Amer. Math. Soc. \textbf{2} (1989), no.~4, 683--773,
\doi{10.1090/S0894-0347-1989-1002633-4}.

\bibitem{AntonucciEtAl}
C.~Antonucci, M.~Gobbino, M.~Migliorini, and N.~Picenni,
\emph{Optimal constants for a nonlocal approximation of Sobolev norms and
total variation}, Anal. PDE \textbf{13} (2020), no.~2, 595--625,
\doi{10.2140/apde.2020.13.595}.

\bibitem{Baernstein}
A.~Baernstein II,
\emph{A unified approach to symmetrization},
in A.~Alvino, E.~Fabes, and G.~Talenti (eds.),
\emph{Partial Differential Equations of Elliptic Type} (Cortona, 1992),
Sympos. Math. XXXV, Cambridge Univ. Press, Cambridge, 1994, pp.~47--91.

\bibitem{BBM2001}
J.~Bourgain, H.~Brezis, and P.~Mironescu,
\emph{Another look at Sobolev spaces},
in J.~L. Menaldi, E.~Rofman, and A.~Sulem (eds.),
\emph{Optimal Control and Partial Differential Equations: A Volume in Honour
of A.~Bensoussan's 60th Birthday}, IOS Press, Amsterdam, 2001, pp.~439--455.

\bibitem{BBM2002}
J.~Bourgain, H.~Brezis, and P.~Mironescu,
\emph{Limiting embedding theorems for $W^{s,p}$ when $s\uparrow1$ and applications},
J. Anal. Math. \textbf{87} (2002), 77--101,
\doi{10.1007/BF02868470}.

\bibitem{BourgainNguyen}
J.~Bourgain and H.-M. Nguyen,
\emph{A new characterization of Sobolev spaces},
C. R. Math. Acad. Sci. Paris \textbf{343} (2006), no.~2, 75--80,
\doi{10.1016/j.crma.2006.05.021}.

\bibitem{BrezisNguyen}
H.~Brezis and H.-M. Nguyen,
\emph{The BBM formula revisited},
Atti Accad. Naz. Lincei Rend. Lincei Mat. Appl. \textbf{27} (2016), no.~4,
515--533, \doi{10.4171/RLM/746}.

\bibitem{BurchardHajaiej}
A.~Burchard and H.~Hajaiej,
\emph{Rearrangement inequalities for functionals with monotone integrands},
J. Funct. Anal. \textbf{233} (2006), no.~2, 561--582,
\doi{10.1016/j.jfa.2005.08.010}.

\bibitem{LiebLoss}
E.~H. Lieb and M.~Loss,
\emph{Analysis}, 2nd ed., Graduate Studies in Mathematics, vol.~14,
American Mathematical Society, Providence, RI, 2001,
\doi{10.1090/gsm/014}.

\bibitem{NguyenSobolev}
H.-M. Nguyen,
\emph{Some new characterizations of Sobolev spaces},
J. Funct. Anal. \textbf{237} (2006), no.~2, 689--720,
\doi{10.1016/j.jfa.2006.04.001}.

\bibitem{NguyenSquassina}
H.-M. Nguyen and M.~Squassina,
\emph{Some remarks on rearrangement for nonlocal functionals},
Nonlinear Anal. \textbf{162} (2017), 1--12,
\doi{10.1016/j.na.2017.06.007}.

\end{thebibliography}
\end{document}